\tikzstyle{vertex}=[circle,fill=white,draw,inner sep=0pt,minimum size=5pt]
\tikzstyle{vortex}=[circle,fill=lightgray,draw,inner sep=0pt,minimum size=5pt]
\tikzstyle{vartex}=[circle,fill=black,draw,inner sep=0pt,minimum size=5pt]
\tikzstyle{bvertex}=[circle,fill=white,draw,inner sep=0pt,minimum size=3pt]
\tikzstyle{bvortex}=[circle,fill=lightgray,draw,inner sep=0pt,minimum size=3pt]
\tikzstyle{bvartex}=[circle,fill=black,draw,inner sep=0pt,minimum size=3pt]
\newcommand{\bvertex}{\node[bvertex]}
\newcommand{\bvartex}{\node[bvartex]}
\numberwithin{equation}{section}
\numberwithin{figure}{section}
\theoremstyle{plain}
\newtheorem{Thm}{Theorem}[section]
\newtheorem{Lemma}[Thm]{Lemma}
\newtheorem{Cor}[Thm]{Corollary}
\newtheorem{ClaimA}{Claim}
\theoremstyle{definition}
\newtheorem{Defn}[Thm]{Definition}
\newtheorem*{Def*}{Definition}
\theoremstyle{remark}
\newtheorem*{Remark}{Remark}
\let \seq=\subseteq
\let \To=\Rightarrow
\let \ToT=\Leftrightarrow
\let \al=\alpha
\let \be=\beta
\let \De=\Delta
\let \la=\lambda
\let \si=\sigma
\let \i=\iota
\let \t=\theta
\let \f=\varphi
\let \p=\ldots
\let \h=\text
\let \cd=\cdots
\let \cov=\vartriangleleft
\newcommand{\Th}{Theorem}
\newcommand{\Co}{Corollary}
\newcommand{\Def}{Definition}
\newcommand{\Defs}{Definitions}
\newcommand{\id}{\mathrm{id}}
\newcommand{\J}{\mathfrak{I}}
\DeclareMathOperator{\inv}{inv}
\newcommand{\Pp}{P_{\leq p}}
\newcommand{\Ptp}{P^\t_{\leq p}}
\newcommand{\pta}{\cov_\t}
\newcommand{\Mp}{\mathrm{M}^\t}
\newcommand{\I}{I_n}
\newcommand{\IB}{I_n^B}
\newcommand{\FB}{F_n^B}
\DeclareMathOperator{\ct}{ct}
\DeclareMathOperator{\di}{di}
\DeclareMathOperator{\ci}{ci}
\DeclareMathOperator{\Bci}{Bci}
\DeclareMathOperator{\mct}{mct}
\DeclareMathOperator{\Bct}{Bct}
\DeclareMathOperator{\mBct}{mBct}
\DeclareMathOperator{\Bcv}{Bcv}
\DeclareMathOperator{\cv}{cv}
\DeclareMathOperator{\negg}{neg}
\DeclareMathOperator{\dna}{dna}
\title{Fixed elements of pircon automorphisms}
\author{Mikael Hansson \and Vincent Umutabazi}
\thanks{}
\address{Department of Mathematics, KTH Royal Institute of Technology, SE-100 44 Stockholm, Sweden}
\email{hansson\_mikael@hotmail.com}
\address{Department of Mathematics, College of Science and Technology, University of Rwanda, Po Box: 3900 Kigali-Rwanda}
\email{v.umutabazi@ur.ac.rw}
\begin{document}

\begin{abstract}
We prove that the subposet induced by the fixed elements of any automorphism of a pircon is also a pircon.
By \cite[\Th~6.4]{A-H-H_pircons}, the order complex of any open interval in a pircon is a PL ball or a PL sphere.
We apply our main results to symmetric groups of the form $S_{2n}$.
A consequence is that the fixed point free signed involutions form a pircon under the dual of the Bruhat order on the hyperoctahedral group.
Finally, we prove that this poset is, in fact, EL-shellable, which is a type~$B$ analogue of \cite[\Th~1]{C-C-T}.
\end{abstract}

\maketitle

\section{Introduction} \label{sec:intro}

A pircon is a poset in which every non-trivial principal order ideal is finite and admits a special partial matching (SPM).
The notion of SPMs was introduced in \cite{A-H} as a combinatorial tool for studying the Kazhdan-Lusztig-Vogan polynomials for fixed point free involutions.
Later in \cite{A-H-H_pircons}, pircons were introduced as a generalisation of zircons, which were originally invented by Marietti in \cite{Marietti_zircons}.
Pircons have since been studied in different settings of Kazhdan-Lusztig theory like \cite{Marietti_pircons,C-M_pircons}.

The Bruhat order on any Coxeter group is a zircon.
More generally, if $\phi$ is an involutive automorphism of a Coxeter system $(W,S)$,
then the induced Bruhat order on the set $\J(\phi)=\{w \in W \mid \phi(w)=w^{-1}\}$ of twisted involutions is a zircon \cite{Hultman2}.
Examples of pircons include the Bruhat order on parabolic quotients $W^J$, $J \seq S$, and on the set $\i(\phi)=\{\phi(w)w^{-1} \mid w \in W\}$ of twisted identities, whenever $W$ is finite and not of type~$A_{2n}$ \cite{A-H-H_pircons}.

The order complex of any open interval in a zircon is a PL sphere \cite{Marietti_zircons}.
In a pircon, every open interval is a PL ball or a PL sphere \cite{A-H-H_pircons}.
In \cite{Hultman_zircon}, it was proved that for a zircon $P$ with any automorphism $\t$, the subposet $P^\t$ induced by the fixed elements of $\t$ is itself a zircon.

In this paper, the main results are as follows.
It is first proved that if a finite poset $P$ has a maximum and an SPM, then $P^\t$, where $\t$ is any automorphism of $P$, also admits an SPM.
Then, it is proved that if $P$ is a pircon, then so is $P^\t$.
As an application, where $P$ is taken to be the dual of the Bruhat order on the fixed point free involutions in the symmetric group, we deduce that the fixed point free signed involutions form a pircon.
Therefore, by \cite[\Th~6.4]{A-H-H_pircons}, the order complex of (the proper part of) this poset is a PL ball, which is a type~$B$ analogue of a result of Can, Cherniavsky, and Twelbeck \cite{C-C-T}.
They actually proved that the fixed point free involutions are EL-shellable.
We extend this result by proving that the fixed point free signed involutions are EL-shellable, too.

The rest of this paper is organised as follows.
In Section~\ref{sec:prel}, we recall some definitions and properties of SPMs, pircons, and shellability.
Section~\ref{sec:main} contains the main results, where we generalise the main results in \cite{Hultman_zircon}.
In Section~\ref{sec:app}, we apply our main results to fixed point free involutions.
Finally, in Section~\ref{sec:EL}, we establish EL-shellability of the fixed point free signed involutions.

\section{Preliminaries} \label{sec:prel}

In this section, we review some preliminaries needed in the sequel.

\subsection{Posets and pircons}

Let $P$ and $Q$ be two posets (partially ordered sets).
Recall that a map $\t \colon P \to Q$ is \emph{order-preserving} if for all $x,y \in P$, $x \leq y$ in $P$ implies $\t(x) \leq \t(y)$ in $Q$.
An \emph{isomorphism} is a bijective order-preserving map whose inverse is also order-preserving.
If $P=Q$, $\t$ is called an \emph{automorphism} of $P$.
In this case, let $P^\t$ be the subposet of $P$ induced by the fixed elements of $\t$.

Suppose that $x,y \in P$ with $x<y$.
We say that $x$ is \emph{covered} by $y$, and write $x \cov y$, if there is no $z \in P$ such that $x<z<y$.
An \emph{order ideal} $I \seq P$ is an induced subposet of $P$ such that, if $x \leq y$ and $y \in I$, then $x \in I$.
An order ideal with a \emph{maximum} (i.e., an element $\hat{1}$ such that $x \leq \hat{1}$ for all $x \in P$) is called \emph{principal}.
In particular, let $\Pp=\{x \in P \mid x \leq p\}$.

A \emph{matching} on $P$ is an involution $M \colon P \to P$ such that $x \cov M(x)$ or $M(x) \cov x$ for all $x \in P$.
As invented by Brenti \cite{Brenti_history,Brenti_intersection}, a matching $M$ on $P$ is \emph{special} if for all $x,y \in P$ with $x \cov y$, either $M(x)=y$ or $M(x)<M(y)$.
When $P$ is Eulerian, special matchings are similar to \emph{compression labellings} introduced by du Cloux \cite{du_Cloux}.

The following definition is taken from \cite{Hultman_zircon}.
Zircons were originally defined by Marietti \cite{Marietti_zircons} in a different way, but the two definitions are equivalent as proved in \cite[Proposition 2.3]{Hultman_zircon}.

\begin{Defn}[\cite{Hultman_zircon}] \label{zircon}
A poset $P$ is a \emph{zircon} if, for every non-minimal element $p \in P$, the principal order ideal $\Pp$ is finite and admits a special matching.
\end{Defn}

The next definition generalises special matchings.
Indeed, we note that a special matching is an SPM without fixed elements.

\begin{Defn}[\cite{A-H}] \label{spm}
Let $P$ be a finite poset with $\hat{1}$ and covering relation $\cov$.
A \emph{special partial matching}, or \emph{SPM}, on $P$ is a function $M \colon P \to P$ such that
\begin{itemize}
  \item $M^2=\id$,
  \item $M(\hat{1}) \cov \hat{1}$,
  \item for all $x \in P$, we have $M(x) \cov x$, $x \cov M(x)$, or $M(x)=x$, and
  \item if $x \cov y$ and $M(x) \neq y$, then $M(x)<M(y)$.
\end{itemize}
\end{Defn}

The following lemma is the ``lifting property'' for SPMs.
It will specifically serve as the main tool in the proof of \Th~\ref{thm:autom}.

\begin{Lemma}[\cite{A-H-H_pircons}, Lifting property] \label{lifting}
Suppose that $P$ is a finite poset with $\hat{1}$ and an SPM $M$. If $x,y \in P$ with $x<y$ and $M(y) \leq y$, then
\begin{itemize}
  \item[(i)]   $M(x) \leq y$,
  \item[(ii)]  $M(x) \leq x \To M(x)<M(y)$, and
  \item[(iii)] $M(x) \geq x \To x \leq M(y)$.
 \end{itemize}
\end{Lemma}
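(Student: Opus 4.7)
The plan is to prove all three parts simultaneously by induction on $n = \ell(x, y)$, the length of a longest chain from $x$ to $y$ in $P$ (well-defined and finite since $P$ is finite). For the base case $n = 1$, i.e., $x \cov y$, I split on whether $M(x) = y$: if so, (i) is immediate, (ii) is vacuous (since $M(x) = y > x$), and (iii) reduces to $x \leq M(y) = x$. If $M(x) \neq y$, the SPM axiom applied to $x \cov y$ gives $M(x) < M(y)$, and combined with $M(y) \leq y$ all three conclusions follow routinely.

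For the inductive step $n \geq 2$, I pick any $z$ with $x \cov z < y$ (which exists by taking the first step of any chain of length $n$) and apply the induction hypothesis to the pair $(z, y)$, legitimate since $\ell(z, y) < n$ and $M(y) \leq y$. This yields the primed analogues (i') $M(z) \leq y$, (ii') $M(z) \leq z \To M(z) < M(y)$, and (iii') $M(z) \geq z \To z \leq M(y)$. I then case-split on whether $M(z) \leq z$. In the first sub-case, a second application of the induction hypothesis to $(x, z)$ is legitimate, and chaining its conclusions with (ii') yields all three desired inequalities---for instance, $M(x) \leq x$ gives $M(x) < M(z) < M(y)$, and $M(x) \geq x$ gives $x \leq M(z) < M(y)$.

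The delicate case, and the main obstacle, is $M(z) > z$, where the induction hypothesis is not available for $(x, z)$. Here (iii') directly yields $x < z \leq M(y)$, which settles (iii). For (i) and (ii), I apply the SPM axiom to the cover $x \cov z$: either $M(x) = z$ (giving $M(x) = z < y$, and the premise of (ii) then fails) or $M(x) < M(z) \leq y$ by (i'), proving (i). For (ii) under the premise $M(x) \leq x$, the subtle sub-case is $M(z) = y$: then $M(y) = z$, so the bound $M(x) < M(z) = y$ is too weak, and I instead argue $M(x) \leq x < z = M(y)$ directly from $x \cov z$. The remaining sub-case $M(z) \neq y$ is cleaner: the SPM axiom on $z \cov y$ gives $M(z) < M(y)$, whence $M(x) < M(z) < M(y)$.
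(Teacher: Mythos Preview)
Your inductive approach is sound and the base case and Case~A ($M(z)\le z$) are fine, but there is a genuine slip in the last sub-case of Case~B. You write ``the SPM axiom on $z \cov y$ gives $M(z) < M(y)$'', but you never established that $z$ is covered by $y$; you chose $z$ with $x \cov z < y$, taking the \emph{first} step of a chain, so in general $z < y$ is not a cover and the SPM axiom does not apply. In fact $M(z) < M(y)$ need not hold here at all.

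The fix, however, is already in your hands: the argument you gave in the $M(z) = y$ sub-case works uniformly. In Case~B you have $z \le M(y)$ from (iii$'$), and under the premise $M(x) \le x$ of (ii) you get
\[
M(x) \le x < z \le M(y),
\]
hence $M(x) < M(y)$. No splitting on whether $M(z) = y$ is needed, and you never have to compare $M(z)$ with $M(y)$.

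As for comparison with the paper: this lemma is quoted from \cite{A-H-H_pircons} and is not proved here, so there is no in-paper argument to compare against. Your induction on the length of a longest $x$--$y$ chain is the standard route for such lifting properties and, once the above slip is repaired, is a correct proof.
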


We now have the following definition.

\begin{Defn}[\cite{A-H-H_pircons}] \label{pircon}
A poset $P$ is a \emph{pircon} if, for every non-minimal element $p \in P$, the principal order ideal $\Pp$ is finite and admits an SPM.
\end{Defn}

From \Defs~\ref{zircon} and \ref{pircon}, it is clear that every zircon is a pircon.
Pircons are the main objects of study in this paper.

\subsection{Shellability and signed permutations}

A finite poset is \emph{bounded} if it has a minimum and a maximum, and \emph{graded} if every maximal chain has the same length.
A chain $x_0<x_1<\cd<x_k$ is \emph{saturated} if $x_{i-1} \cov x_i$ for all $i \in [k]=\{1,2,\p,k\}$, and an \emph{$x$-$y$-chain} is a saturated chain from $x$ to $y$.

Let $P$ be a finite, bounded, and graded poset.
An \emph{edge-labelling} of $P$ is a function $\la \colon \{(x,y) \in P^2 \mid x \cov y\} \to Q$, where $Q$ is some totally ordered set.
If $\la$ is an edge-labelling of $P$ and $C$ is an $x_0$-$x_k$-chain, let $\la(C)=(\la(x_0,x_1),\la(x_1,x_2),\p,\la(x_{k-1},x_k))$.
The chain is called \emph{increasing} if the sequence $\la(C)$ is weakly increasing, and \emph{decreasing} if $\la(C)$ is strongly decreasing.
An edge-labelling $\la$ of $P$ is an \emph{EL-labelling} if, for all $x<y$ in $P$, there is exactly one increasing $x$-$y$-chain and this chain is \emph{lex-minimal} (i.e., minimal in the lexicographic order) among the $x$-$y$-chains in $P$.
If $P$ has an EL-labelling, $P$ is called \emph{EL-shellable}, because then its order complex $\De(P)$ is shellable~\cite{Bjorner}.

Let $S_{2n}$ denote the group of permutations of $[\pm n]=\{\pm 1,\pm 2,\p,\pm n\}$.
With the adjacent transpositions $(i,i+1)$ as generators, this is a Coxeter group of type~$A_{2n-1}$.
Consider the subgroup $S_n^B$ of permutations $\si$ of $[\pm n]$ such that $\si(-i)=-\si(i)$ for all $i \in [\pm n]$.
This is a standard way to represent the hyperoctahedral group $B_n$ as the group of ``signed permutations''.
(The Coxeter generators are $(1,-1)$ and $(i,i+1)(-i,-i-1)$ for $i \in [n-1]$.)
We shall use $\I$ for the involutions in $S_n$, $\IB$ for the involutions in $S_n^B$ (the ``signed involutions''), and $\FB$ for the fixed point free signed involutions.

The Bruhat order on $S_n^B$ is an induced subposet of the Bruhat order on $S_{2n}$.
It may be defined as follows, where $\si[i,j]=|\{k \leq i \mid \si(k) \geq j\}|$ (see, e.g., \cite[\Th~8.1.8]{B-B}).

\begin{Defn} \label{order}
Let $\si,\tau \in S_n^B$. Then $\si \leq \tau$ if and only if $\si[i,j] \leq \tau[i,j]$ for all $(i,j) \in [\pm n]^2$.
\end{Defn}

\section{Pircons and automorphisms} \label{sec:main}

The following theorem and its corollary generalise the main results of \cite{Hultman_zircon} from special matchings to special partial matchings.
The proof ideas are similar, but the possibility of $M$ fixing elements introduces additional complications.

\begin{Thm} \label{thm:autom}
Let $P$ be a finite poset with a maximum and a special partial matching $M$.
Let also $\t$ be any automorphism of $P$.
Then the subposet $P^\t$ induced by the fixed elements of $\t$ has a special partial matching.
\end{Thm}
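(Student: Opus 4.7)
The plan is to construct an SPM $M^\t$ on $P^\t$ explicitly by following an alternating walk in $P$ that uses $M$ and $\t$, generalising the construction given in \cite{Hultman_zircon} for zircons. The lifting property (Lemma~\ref{lifting}) will be the main tool, and the handling of $M$-fixed points will be the principal new obstacle beyond the zircon case.

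First I would record two setup observations. Since the maximum $\hat{1}$ of $P$ is unique, $\t(\hat{1}) = \hat{1}$, so $\hat{1}$ is also the maximum of $P^\t$. Moreover, conjugating by any power of $\t$ sends an SPM to an SPM, so $\t^k M \t^{-k}$ is an SPM on $P$ for every $k$; in particular, $\t M \t^{-1}$ is an SPM, and this is what lets $M$ and the orbit structure of $\t$ interact compatibly.

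For each $x \in P^\t$ I would form the sequence $x_0 = x$, $x_{2j+1} = M(x_{2j})$, $x_{2j+2} = \t(x_{2j+1})$, and set $M^\t(x) = x_k$ where $k > 0$ is the least index with $x_k \in P^\t$; when no such $k$ exists (as will happen, for example, if $M(x) = x$), declare $x$ a fixed point of $M^\t$. Intuitively, one walks from $x$ by alternating $M$-steps (which change rank) with $\t$-steps (which preserve rank) until the walk lands back in $P^\t$. I would then verify the four axioms of an SPM on $P^\t$: (i) $M^\t$ is an involution; (ii) $M^\t(\hat{1}) \cov \hat{1}$ in $P^\t$; (iii) for every $x \in P^\t$, $M^\t(x)$ is equal to $x$ or stands in a cover relation with $x$ in $P^\t$; and (iv) the special-matching condition for $M^\t$. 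Repeated application of Lemma~\ref{lifting} along successive steps of the walk should show that the walk is monotone in $P$ and that no $\t$-fixed element lies strictly between $x$ and $M^\t(x)$, giving (ii) and (iii). The involution (i) follows because starting the walk from $M^\t(x)$ retraces the original path (up to $M$ being involutive and $\t$ preserving $P^\t$ pointwise), and (iv) follows from running the walks at a covering pair $x \cov y$ in $P^\t$ in parallel and invoking the lifting property step by step.

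The main new difficulty compared with the zircon setting is the possible existence of $M$-fixed points in $P \setminus P^\t$: if some intermediate $x_i$ of the walk satisfies $M(x_i) = x_i$ but $x_i \notin P^\t$, the walk stalls. The hard part will be to rule this out by comparing the two SPMs $M$ and $\t M \t^{-1}$ via the lifting property, forcing any such $x_i$ to be $\t$-fixed and hence to already lie in $P^\t$, contradicting the minimality of the termination index. This step, together with the careful verification of the special-matching condition (iv), is where I expect the proof to need the most care.
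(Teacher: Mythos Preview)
Your proposal has a genuine gap: termination of the alternating walk is not established, and your fallback rule breaks the SPM axioms. Writing $M_i=\t^i M\t^{-i}$ and using $\t(x)=x$, one checks that the even-indexed subsequence is $x_{2j}=M_1M_2\cdots M_j(x)$; hence $x_{2j+2}=x_{2j}$ if and only if $M_{j+1}(x)=x$, which never happens once $M(x)\neq x$. So the walk cannot stabilise, and there is no argument that it must visit an element of $P^\t$ before entering a cycle among non-fixed elements. If this happens starting from $\hat 1$ (where $M(\hat 1)\neq\hat 1$), your rule would set $M^\t(\hat 1)=\hat 1$, contradicting axiom~(ii). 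In addition, the walk is not ``monotone in $P$'' in any useful sense: the $\t$-steps send $x_{2j+1}$ to an element at the same height but typically incomparable to it, so the lifting property cannot simply be applied ``along successive steps of the walk''. Finally, the involution property does not follow by ``retracing'': the walk from $M^\t(x)$ again applies $M$ then $\t$, not their inverses in reverse order, so it traces a different path through $C(x)$.

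The paper (and Hultman's zircon argument that it generalises) avoids all of this by working globally rather than along a fixed walk. One lets $C(p)$ be the entire connected component of $p$ under the family of conjugate SPMs $M_1,\dots,M_K$ and uses the lifting property to show that $C(p)$ has a \emph{unique} minimum and a \emph{unique} maximum; since $\t$ permutes the $M_i$, it fixes $C(p)$ setwise whenever $p\in P^\t$, so both extremes lie in $P^\t$, and one sets $M^\t(p)$ equal to whichever of $\min C(p)$, $\max C(p)$ is not $p$. The new wrinkle in the pircon case is precisely that some $M_i$ may fix elements of $C(p)$, and this is absorbed into the min/max argument rather than into a walk that might stall; your suggested fix (forcing any $M$-fixed intermediate $x_i$ to be $\t$-fixed) is not correct in general. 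I would recommend switching to the component-based definition of $M^\t$.
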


\begin{proof}
Since $P$ is finite, $\t$ has finite order $K$.
Observe that each automorphism $\t^i$, $i \in [K]$, transforms the SPM $M$ into an SPM $M_i$, i.e., $\t^i \circ M=M_i \circ \t^i$.
In particular, $M_K=M$.

For a given $p \in P$, define
\[
C(p)=\{a \in P \mid \h{$a=M_{i_t} \circ M_{i_{t-1}} \circ \cd \circ M_{i_1}(p)$ for some $i_1,\p,i_t \in [K]$}\}.
\]
Thus, $C(p)$ consists of the elements that are connected to $p$ by the SPMs $M_i$.
By abuse of notation, we also let $C(p)$ denote the subposet of $P$ induced by the set $C(p)$.
For $a \in C(p)$ given by $a=M_{i_t} \circ M_{i_{t-1}} \circ \cd \circ M_{i_1}(p)$,
define $b \in C(p)$ as $b=M_{i_t}' \circ M_{i_{t-1}}' \circ \cd \circ M_{i_1}'(p)$, where the $M_{i_j}'$ are recursively given by
\begin{equation} \label{prim}
M_{i_j}'=\begin{cases}M_{i_j} & \h{if $M_{i_j} \circ M_{i_{j-1}}' \circ \cd \circ M_{i_1}'(p)<M_{i_{j-1}}' \circ \cd \circ M_{i_1}'(p)$,} \\
\id & \h{otherwise.}\end{cases}
\end{equation}

For $a_j,b_j \in C(p)$, where $a_j=M_{i_j} \circ \cd \circ M_{i_1}(p)$ and $b_j=M_{i_j}' \circ \cd \circ M_{i_1}'(p)$, $1 \leq j \leq t$, we have $b_j \leq a_j$.
To see it, we use induction by assuming that $b_{j-1} \leq b_{a-1}$.
We have the following three cases.
\begin{itemize}
  \item[(i)]   If $a_j \geq a_{j-1}$, there is nothing to prove.
  \item[(ii)]  If $a_j<a_{j-1}$ and $M_{i_j}'=M_{i_j}$, we have $b_j= M_{i_j}(b_{j-1})<b_{j-1}$ and $M_{i_j}(a_{j-1})=a_j<a_{j-1}$.
Hence, we may apply the lifting property to get $b_j \leq a_j$ (with equality if and only if $a_{j-1}=b_{j-1}$).
  \item[(iii)] Finally, if $a_j<a_{j-1}$ and $M_{i_j}'=\id$, we have $M_{i_j}(b_{j-1}) \geq b_{j-1}$ and $M_{i_j}(a_{j-1})=a_j<a_{j-1}$.
We can then apply the lifting property to get $b_j=b_{j-1} \leq a_j$.
\end{itemize}

Thus $b_j \leq a_j$, proving the claim. In particular, $b \leq a$.
By construction, $b \leq p$, so if $p$ and $a$ are both minimal in $C(p)$, we have $a=b=p$.
Hence $C(p)$ contains a unique minimal element.
Analogously, if we reverse the inequality in \eqref{prim}, $C(p)$ also contains a unique maximal element.%
\footnote{Observe that $P$ need not have a minimum.
However, Lemma~\ref{lifting} will still hold, with the same proof as in \cite{A-H-H_pircons}, even if $P$ does not have a maximum, as long as $M$ satisfies the other three conditions in \Def~\ref{spm}.}

If $p$ is not minimal in $C(p)$, then $M_i(p) \cov p$ for some $i$.
To see this, note that if $a<p$ in the above construction, then $b \neq p$ and so we may choose $i=i_j$ for the minimal $j$ which satisfies $M_{i_j}'=M_{i_j}$.
Reversing the inequality in \eqref{prim}, a similar reasoning shows that if $p$ is not maximal in $C(p)$, then there is at least one $i$ such that $p \cov M_i(p)$.

If $p \in P^\t$ and $p \leq M_i(p)$ for at least one $i$, then this will hold for all $i$ and hence $p=\min C(p)$.
Similarly, if $p \in P^\t$ and $M_i(p) \leq p$, then $p=\max C(p)$.
Therefore, for any $p \in P^\t$ we have $p=\min C(p) $ or $p=\max C(p)$, both if $C(p)=\{p\}$.

Since $\t$ permutes the SPMs $M_i$, we have $\t(C(p))=C(\t(p))$ for all $p \in P$. Thus the claim below follows.

\begin{ClaimA} \label{claim:c1}
If $p \in P^\t$, then both $\min C(p)$ and $\max C(p)$ belong to $P^\t$.
\end{ClaimA}

Moreover we have:

\begin{ClaimA} \label{claim:c2}
For any $x,y \in P$, if $\min C(x) \leq \max C(y)$, then $\min C(x) \leq \min C(y)$ and $\max C(x) \leq \max C(y)$.
\end{ClaimA}

\begin{proof}
Let $p \in P$ be arbitrary. If $s=\max C(s)$ and $C(s) \neq C(p)$, it follows from the lifting property that $s>p \To s>M_i(p)$ for all $i$.
This shows that $s>p \To s>\max C(p)$ (by repeated application of the lifting property).
A similar argument shows that if $s=\min C(s)$ and $C(s) \neq C(p)$, then $s<p \To s<\min C(p)$.
Hence, by using the above argument, if we take $s=\max C(y)$ and $p=\min C(x)$ where $s>p$, we get $\max C(y) \geq \max C(x)$.
Similarly, if $s=\min C(x)$ and $p=\max C(y)$ where $s<p$, then $\min C(x) \leq \min C(y)$.
\end{proof}

Let $\pta$ denote the covering relation in $P^\t$.

\begin{ClaimA} \label{claim:c3}
If $\al=\min C(p)$ and $\be=\max C(p)$ belong to $P^\t$, then $\al \pta \be$ or $\al=\be$.
\end{ClaimA}

\begin{proof}
Assume that $x\in P^\t$. If $x=\min C(x)$ and $x<\be$, then by Claim~\ref{claim:c2}, $x \leq \al$.
Similarly, if $x=\max C(x)$ and $\al<x$, then by Claim~\ref{claim:c2}, $\be \leq x$.
In either case, $x$ does not satisfy $\al<x<\be$.
\end{proof}

Define the function $\Mp \colon P^\t\to P^\t$ by
\[
\Mp(p)=\begin{cases}\max C(p) & \h{if $p=\min C(p)$,} \\ \min C(p) & \h{if $p=\max C(p)$.}\end{cases}
\]
By Claim~\ref{claim:c1}, $\Mp$ is well defined.
We show that $\Mp$ is an SPM on $P^\t$ by verifying the conditions in \Def~\ref{spm}.

\begin{itemize}
  \item For every $x \in P^\t$, $(\Mp)^2(x)=x$. That is, $(\Mp)^2=\id$.

  \item Since $\t$ is an automorphism of $P$, $\t(\hat{1})=\hat{1}$, so $\hat{1} \in P^\t$.
Because $M_i$ is an SPM, $M_i(\hat{1}) \neq \hat{1}$, so $\hat{1} \neq \min C(\hat{1})$.
By Claim~\ref{claim:c3}, $\Mp(\hat{1})=\min C(\hat{1}) \pta \hat{1}$.

  \item For any $x \in P^\t$, Claim~\ref{claim:c3} shows that $\Mp(x) \pta x$, $x \pta \Mp(x)$, or $\Mp(x)=x$.

  \item Let $x \pta y$ and $\Mp(x) \neq y$. We must show that $\Mp(x)<\Mp(y)$.
By Claim~\ref{claim:c2}, $\min C(x) \leq \min C(y)$ and $\max C(x) \leq \max C(y)$.
Moreover, both inequalities are strict since $\Mp(x) \neq y$.
\begin{itemize}
  \item[(i)]   If $x \neq \min C(x)$, then $\Mp(x)=\min C(x)<\min C(y) \leq \Mp(y)$.
  \item[(ii)]  If $y \neq \max C(y)$, we have $\Mp(x) \leq \max C(x)<\max C(y)=\Mp(y)$.
  \item[(iii)] If $x=\min C(x)$ and $y=\max C(y)$, we have $x=\min C(x)<\min C(y) \leq y$ and $x \leq \max C(x)<\max C(y)=y$.
Because $x \pta y$, we conclude that $\min C(x)=\max C(x)$ and $\min C(y)=\max C(y)$.
Hence $\Mp(x)=x \pta y=\Mp(y)$. \qedhere
\end{itemize}
\end{itemize}
\end{proof}

From \Th~\ref{thm:autom} we have the following corollary.

\begin{Cor} \label{cor:autom}
Let $P$ be a pircon. If $\t$ is any automorphism of $P$, then the subposet $P^\t$ is a pircon.
\end{Cor}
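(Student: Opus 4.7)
The plan is to deduce \Co~\ref{cor:autom} directly from \Th~\ref{thm:autom} by localising to each principal order ideal of $P^\t$. Fix a non-minimal element $p \in P^\t$; I must show that $\Ptpp$ is finite and admits an SPM.

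Since $p$ is non-minimal in $P^\t$, there is some $q \in P^\t$ with $q<p$, so $p$ is a fortiori non-minimal in the ambient pircon $P$. The pircon hypothesis then furnishes a finite principal order ideal $\Pp$ equipped with an SPM. Moreover, the automorphism $\t$ restricts to an automorphism of $\Pp$: because $\t(p)=p$ and $\t$ is order-preserving, $\t(\Pp) \seq P_{\leq \t(p)}=\Pp$, and the same argument applied to $\t^{-1}$ gives equality. Thus $\Pp$ is a finite poset with a maximum admitting both an SPM and an automorphism, so \Th~\ref{thm:autom} produces an SPM on the fixed subposet $(\Pp)^\t$.

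It remains only to observe that $(\Pp)^\t$ and $\Ptpp$ coincide as induced subposets of $P$: an element $x \in P$ lies in either iff $x \leq p$ and $\t(x)=x$. Hence the SPM just produced lives on $\Ptpp$, and as $p$ was an arbitrary non-minimal element of $P^\t$, the defining property of a pircon is verified. There is essentially no obstacle here — all the real content sits inside \Th~\ref{thm:autom}, and this corollary is a cosmetic wrapper whose only substantive point is the straightforward verification that $\t$ preserves each $\t$-fixed principal ideal.
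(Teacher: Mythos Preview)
Your proof is correct and follows essentially the same approach as the paper's own proof: restrict $\t$ to $\Pp$ for a non-minimal $p\in P^\t$ and invoke \Th~\ref{thm:autom}. You are merely more explicit about a couple of points the paper leaves tacit---that non-minimality in $P^\t$ implies non-minimality in $P$, and that $\t$ indeed restricts to an automorphism of $\Pp$---but these are straightforward and the underlying argument is identical.
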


\begin{proof}
For any non-minimal element $p \in P$, the order ideal $\Pp$ is finite and has an SPM.
Let $p \in P^\t$ be non-minimal.
The elements in $\Ptp$ are the fixed elements of the restriction of $\t$ to $\Pp$.
Therefore, by \Th~\ref{thm:autom}, \smash{$\Ptp$} has an SPM.
\end{proof}

\section{An application of \Co~\ref{cor:autom}} \label{sec:app}

As an application of \Co~\ref{cor:autom}, we shall derive an example of a pircon from the fixed point free involutions in the symmetric group $S_{2n}$.
Below we shall freely use Coxeter group theoretic terminology.
For definitions and preliminaries we recommend the reader to consult \cite{B-B}.

Let $w_0$ be the reverse permutation in $S_{2n}$, and $C(w_0)$ the conjugacy class of $w_0$, i.e., the set of fixed point free involutions.
(Recall that $S_{2n}$ denotes the group of permutations of $[\pm n]$.)
By using \cite[\Th~4.3]{A-H}, one can show that $C(w_0)$, with the dual of the Bruhat order inherited from $S_{2n}$, is a pircon.
Consider the Coxeter group, and Bruhat order, automorphism $\f \colon S_{2n} \to S_{2n}$ given by $\si \mapsto w_0\si w_0$.
Observe that the fixed element subgroup $S_{2n}^\f=S_n^B$.
Because $\f$ preserves $C(w_0)$ (i.e., $\f(C(w_0))=C(w_0)$), it follows from \Co~\ref{cor:autom} that $C(w_0)^\f$, i.e., the set $\FB$ of fixed point free signed involutions, is a pircon (where we have identified $\f$ with its restriction to $C(w_0)$).

Since the partial order induced by the Bruhat order on $S_{2n}$ coincides with the Bruhat order on $S_n^B$, we conclude that $\FB$, ordered by the dual of the Bruhat order on $S_n^B$, is a pircon.
By \cite[\Th~6.4]{A-H-H_pircons}, we therefore have the following result.
It is a type~$B$ analogue of \cite[\Th~6]{C-C-T}, which asserts that (the proper part of) $C(w_0)$, with the dual of the Bruhat order inherited from $S_{2n}$, is a PL ball.

\begin{Cor} \label{cor:ball}
The order complex of (the proper part of) the fixed point free signed involutions is a PL ball.
\end{Cor}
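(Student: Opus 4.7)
The plan is to realise the proper part of $\FB$ as an open interval in the pircon $\FB$ ordered by the dual Bruhat order, and then invoke \cite[\Th~6.4]{A-H-H_pircons}, together with a verification that excludes the sphere alternative.

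First I would identify the bounds of $\FB$ under the dual Bruhat order. The long element $w_0 \in S_{2n}$ maps $i \mapsto -i$, so it lies in $S_n^B$ and is fixed point free; hence $w_0 \in \FB$, and being the Bruhat maximum of $S_{2n}$, it is the minimum $\hat{0}$ of $\FB$ under the dual order. Symmetrically, the Bruhat-minimum fixed point free signed involution is the maximum $\hat{1}$. Thus $\FB$ is bounded, and its proper part coincides with the open interval $(\hat{0}, \hat{1})$.

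Second, since the preceding discussion has established that $\FB$ under the dual Bruhat order is a pircon, \cite[\Th~6.4]{A-H-H_pircons} applies to $(\hat{0},\hat{1})$ and immediately yields that its order complex is either a PL ball or a PL sphere.

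The main obstacle is ruling out the sphere alternative. I would analyse the SPM $\Mp$ produced by the proof of \Th~\ref{thm:autom}, applied with $P = C(w_0)$ (dual Bruhat) and $\t = \f$; by construction $\Mp$ fixes every $p$ with $|C(p)| = 1$, that is, every fixed point free signed involution on which the SPM $M$ of \cite[\Th~4.3]{A-H} and the automorphism $\f$ act trivially. Producing a single such $p$ in the open interval, via the criterion through which \cite[\Th~6.4]{A-H-H_pircons} separates the ball case from the sphere case, would then force the PL ball conclusion. A routine rank and parity inspection, modelled on the argument underlying the type~$A$ analogue \cite[\Th~6]{C-C-T}, is expected to provide the required element.
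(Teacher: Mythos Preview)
Your overall plan matches the paper's route exactly: the paper also deduces that $\FB$ under the dual Bruhat order is a pircon (via \Co~\ref{cor:autom} applied to $C(w_0)$ and $\f$) and then simply invokes \cite[\Th~6.4]{A-H-H_pircons} to state \Co~\ref{cor:ball}. In particular, the paper provides no separate argument beyond that citation; the identification of $\hat{0}$ and $\hat{1}$ and the recognition of the proper part as an open interval are left implicit.

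Where you diverge is in your third step. The paper does not carry out any explicit ``rule out the sphere'' argument; it treats the citation of \cite[\Th~6.4]{A-H-H_pircons} as yielding the PL ball conclusion directly. You are right that the abstract statement ``ball or sphere'' does not by itself suffice, and that one must appeal to the actual criterion inside that theorem (which distinguishes the two outcomes according to whether some SPM on the interval has a fixed element). However, your proposed resolution is not a proof: saying that ``a routine rank and parity inspection \dots\ is expected to provide the required element'' is precisely the step that needs to be done, not asserted. Concretely, you would need to exhibit (for the SPM on the full poset, i.e., on the principal ideal below $\hat{1}$) an element $p\in\FB\setminus\{\hat{0},\hat{1}\}$ with $M(p)=p$ for the chosen SPM $M$ of \cite[\Th~4.3]{A-H}; your reduction of $\Mp(p)=p$ to $M(p)=p$ for $p\in P^\t$ is correct, but you have not produced such a $p$, nor checked that the criterion in \cite[\Th~6.4]{A-H-H_pircons} is phrased in a way that this suffices.

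So: the approach is the paper's, but your plan contains a genuine gap at the end---one that the paper itself leaves to the cited theorem rather than spelling out.
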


\section{EL-shellability of the fixed point free signed involutions} \label{sec:EL}

In this section, we establish a stronger property of the fixed point free signed involutions, namely, EL-shellability.
This is a type~$B$ analogue of \cite[\Th~1]{C-C-T}, which claims EL-shellability of the fixed point free involutions.%
\footnote{It was as a consequence of this result, together with results from \cite{Bjorner,D-K,Hultman3,Incitti_A},
that Can, Cherniavsky, and Twelbeck proved the type~$A$ version of \Co~\ref{cor:ball}.}
We follow the strategy in the proof of \cite[\Th~1.4]{Hansson_graded_A}, where this fact was reproved.
Both proofs rely heavily on Incitti's classification of the covering relation of the Bruhat order on the involutions~\cite{Incitti_A}.
Similarly, our proof here makes heavy us of his classification for the signed involutions~\cite{Incitti_B}.
We shall describe the parts of these classifications that are needed in order to understand the proof given here.
Let us first describe the possible coverings for the involutions, which all appear for the signed involutions.

For $\si,\tau \in \I$, we write $\tau=\ct_{(i,j)}(\si)$, where $\ct$ stands for \emph{covering transformation}, if they agree everywhere but for the elements whose positions are marked with dots in Figure~\ref{Incitti}, which is taken from \cite[Table~1]{Incitti_A}.

\begin{figure}[tb]
\begin{subfigure}[b]{0.3\textwidth}
\begin{tikzpicture}[scale=0.55]
  \draw (0,0) rectangle (5,5);
  \draw[fill=lightgray] (2,2)--(3,2)--(3,3)--(2,3)--(2,2);
  \draw[dotted] (0,0)--(5,5);
  \bvartex (s1) at (2,2) {}; \bvertex (v1) at (3,2) {}; \bvartex (s2) at (3,3) {}; \bvertex (v2) at (2,3) {};
  \draw (2,0.15)--(2,-0.15); \draw (3,0.15)--(3,-0.15);
  \draw (0.15,2)--(-0.15,2); \draw (0.15,3)--(-0.15,3);
  \node [below] at (2,0) {\tiny $i$}; \node [below] at (3,0) {\tiny $j$};
  \node [left]  at (0,2) {\tiny $i$}; \node [left]  at (0,3) {\tiny $j$}; \node [left] at (0,0) {\phantom{\tiny $\si(j)$}};
\end{tikzpicture}
\caption*{Type~1}
\bigskip
\end{subfigure}
\begin{subfigure}[b]{0.3\textwidth}
\begin{tikzpicture}[scale=0.55]
  \draw (0,0) rectangle (5,5);
  \draw[fill=lightgray] (1.5,1.5)--(3.5,1.5)--(3.5,2.5)--(2.5,2.5)--(2.5,3.5)--(1.5,3.5)--(1.5,1.5);
  \draw[dotted] (0,0)--(5,5);
  \bvartex (s1) at (1.5,1.5) {}; \bvertex (v1) at (3.5,1.5) {}; \bvartex (s2) at (3.5,2.5) {};
  \bvertex (v2) at (2.5,2.5) {}; \bvartex (s3) at (2.5,3.5) {}; \bvertex (v3) at (1.5,3.5) {};
  \draw (1.5,0.15)--(1.5,-0.15); \draw (2.5,0.15)--(2.5,-0.15); \draw (3.5,0.15)--(3.5,-0.15);
  \draw (0.15,1.5)--(-0.15,1.5); \draw (0.15,2.5)--(-0.15,2.5); \draw (0.15,3.5)--(-0.15,3.5);
  \node [below] at (1.5,0) {\tiny $i$}; \node [below] at (2.5,0) {\tiny $j$}; \node [below] at (3.5,0) {\tiny $\si(j)$};
  \node [left]  at (0,1.5) {\tiny $i$}; \node [left]  at (0,2.5) {\tiny $j$}; \node [left]  at (0,3.5) {\tiny $\si(j)$};
\end{tikzpicture}
\caption*{Type~2}
\bigskip
\end{subfigure}
\begin{subfigure}[b]{0.3\textwidth}
\begin{tikzpicture}[scale=0.55]
  \draw (0,0) rectangle (5,5);
  \draw[fill=lightgray] (1.5,2.5)--(2.5,2.5)--(2.5,1.5)--(3.5,1.5)--(3.5,3.5)--(1.5,3.5)--(1.5,2.5);
  \draw[dotted] (0,0)--(5,5);
  \bvartex (s1) at (1.5,2.5) {}; \bvertex (v1) at (2.5,2.5) {}; \bvartex (s2) at (2.5,1.5) {};
  \bvertex (v2) at (3.5,1.5) {}; \bvartex (s3) at (3.5,3.5) {}; \bvertex (v3) at (1.5,3.5) {};
  \draw (1.5,0.15)--(1.5,-0.15); \draw (2.5,0.15)--(2.5,-0.15); \draw (3.5,0.15)--(3.5,-0.15);
  \draw (0.15,1.5)--(-0.15,1.5); \draw (0.15,2.5)--(-0.15,2.5); \draw (0.15,3.5)--(-0.15,3.5);
  \node [below] at (1.5,0) {\tiny $i$}; \node [below] at (2.5,0) {\tiny $\si(i)$}; \node [below] at (3.5,0) {\tiny $j$};
  \node [left]  at (0,1.5) {\tiny $i$}; \node [left]  at (0,2.5) {\tiny $\si(i)$}; \node [left]  at (0,3.5) {\tiny $j$}; \node [left] at (0,0) {\phantom{\tiny $\si(j)$}};
\end{tikzpicture}
\caption*{Type~3}
\bigskip
\end{subfigure}
\begin{subfigure}[b]{0.3\textwidth}
\begin{tikzpicture}[scale=0.55]
  \draw (0,0) rectangle (5,5);
  \draw[fill=lightgray] (1,3)--(2,3)--(2,4)--(1,4)--(1,3);
  \draw[fill=lightgray] (3,1)--(4,1)--(4,2)--(3,2)--(3,1);
  \draw[dotted] (0,0)--(5,5);
  \bvartex (s1) at (1,3) {}; \bvertex (v1) at (2,3) {}; \bvartex (s2) at (2,4) {}; \bvertex (v2) at (1,4) {};
  \bvartex (s3) at (3,1) {}; \bvertex (v3) at (4,1) {}; \bvartex (s4) at (4,2) {}; \bvertex (v4) at (3,2) {};
  \draw (1,0.15)--(1,-0.15); \draw (2,0.15)--(2,-0.15); \draw (3,0.15)--(3,-0.15); \draw (4,0.15)--(4,-0.15);
  \draw (0.15,1)--(-0.15,1); \draw (0.15,2)--(-0.15,2); \draw (0.15,3)--(-0.15,3); \draw (0.15,4)--(-0.15,4);
  \node [below] at (1,0) {\tiny $i$}; \node [below] at (2,0) {\tiny $j$}; \node [below] at (3,0) {\tiny $\si(i)$}; \node [below] at (4,0) {\tiny $\si(j)$};
  \node [left]  at (0,1) {\tiny $i$}; \node [left]  at (0,2) {\tiny $j$}; \node [left]  at (0,3) {\tiny $\si(i)$}; \node [left]  at (0,4) {\tiny $\si(j)$};
\end{tikzpicture}
\caption*{Type~4}
\end{subfigure}
\begin{subfigure}[b]{0.3\textwidth}
\begin{tikzpicture}[scale=0.55]
  \draw (0,0) rectangle (5,5);
  \draw[fill=lightgray] (1,2)--(2,2)--(2,1)--(4,1)--(4,3)--(3,3)--(3,4)--(1,4)--(1,2);
  \draw[dotted] (0,0)--(5,5);
  \bvartex (s1) at (1,2) {}; \bvertex (v1) at (2,2) {}; \bvartex (s2) at (2,1) {}; \bvertex (v2) at (4,1) {};
  \bvartex (s3) at (4,3) {}; \bvertex (v3) at (3,3) {}; \bvartex (s4) at (3,4) {}; \bvertex (v4) at (1,4) {};
  \draw (1,0.15)--(1,-0.15); \draw (2,0.15)--(2,-0.15); \draw (3,0.15)--(3,-0.15); \draw (4,0.15)--(4,-0.15);
  \draw (0.15,1)--(-0.15,1); \draw (0.15,2)--(-0.15,2); \draw (0.15,3)--(-0.15,3); \draw (0.15,4)--(-0.15,4);
  \node [below] at (1,0) {\tiny $i$}; \node [below] at (2,0) {\tiny $\si(i)$}; \node [below] at (3,0) {\tiny $j$}; \node [below] at (4,0) {\tiny $\si(j)$};
  \node [left]  at (0,1) {\tiny $i$}; \node [left]  at (0,2) {\tiny $\si(i)$}; \node [left]  at (0,3) {\tiny $j$}; \node [left]  at (0,4) {\tiny $\si(j)$};
\end{tikzpicture}
\caption*{Type~5}
\end{subfigure}
\begin{subfigure}[b]{0.3\textwidth}
\begin{tikzpicture}[scale=0.55]
  \draw (0,0) rectangle (5,5);
  \draw[fill=lightgray] (1,2)--(2,2)--(2,1)--(3,1)--(3,2)--(4,2)--(4,3)--(3,3)--(3,4)--(2,4)--(2,3)--(1,3)--(1,2);
  \draw[dotted] (0,0)--(5,5);
  \bvartex (s1) at (1,2) {}; \bvartex (s2) at (2,1) {}; \bvertex (v1) at (3,1) {}; \bvertex (v2) at (4,2) {};
  \bvartex (s3) at (4,3) {}; \bvartex (s4) at (3,4) {}; \bvertex (v3) at (2,4) {}; \bvertex (v4) at (1,3) {};
  \draw (1,0.15)--(1,-0.15); \draw (2,0.15)--(2,-0.15); \draw (3,0.15)--(3,-0.15); \draw (4,0.15)--(4,-0.15);
  \draw (0.15,1)--(-0.15,1); \draw (0.15,2)--(-0.15,2); \draw (0.15,3)--(-0.15,3); \draw (0.15,4)--(-0.15,4);
  \node [below] at (1,0) {\tiny $i$}; \node [below] at (2,0) {\tiny $\si(i)$}; \node [below] at (3,0) {\tiny $\si(j)$}; \node [below] at (4,0) {\tiny $j$};
  \node [left]  at (0,1) {\tiny $i$}; \node [left]  at (0,2) {\tiny $\si(i)$}; \node [left]  at (0,3) {\tiny $\si(j)$}; \node [left]  at (0,4) {\tiny $j$};
\end{tikzpicture}
\caption*{Type~6}
\end{subfigure}
\caption{The involutions $\si$ and $\tau=\ct_{(i,j)}(\si)$.
The black and white dots mark the positions of the elements of $\si$ and $\tau$, respectively, which ``move'' in the transformation.
Inside the grey areas, there are no elements of $\si$.} \label{Incitti}
\end{figure}
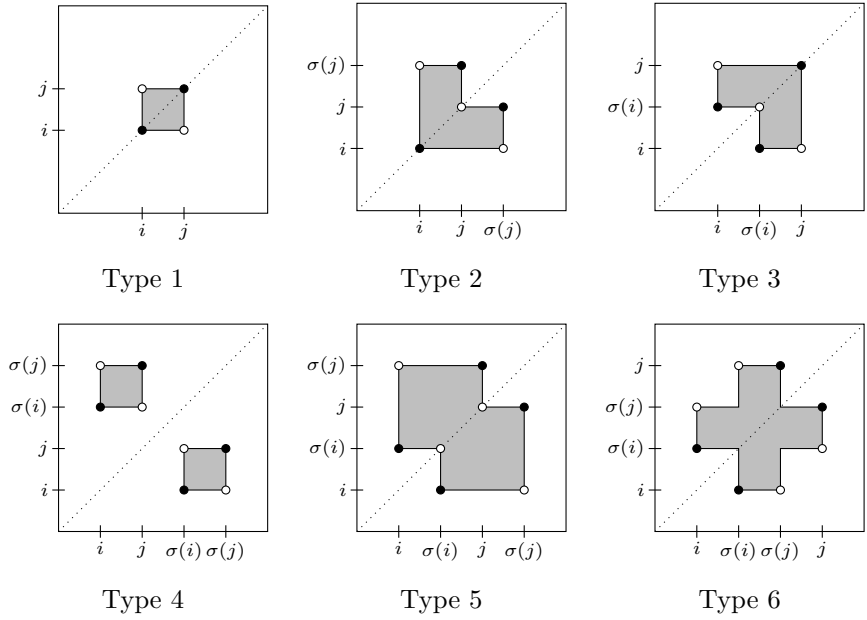

Incitti characterised the covering relation for the involutions as follows.

\begin{Lemma}[\mbox{\cite[\Th~5.1]{Incitti_A}}] \label{cover}
Let $\si,\tau \in \I$. Then $\si \cov \tau$ in $\I$ if and only if $\tau=\ct_{(i,j)}(\si)$ for some (necessarily unique) pair $(i,j)$.
\end{Lemma}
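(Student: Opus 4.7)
I would treat the two directions separately.

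For the backward direction, suppose $\tau=\ct_{(i,j)}(\si)$ is one of the six patterns of Figure~\ref{Incitti}. Then $\tau$ is clearly an involution, and $\si<\tau$ follows from \Def~\ref{order} by a direct check of rank matrices in each picture: $\tau[a,b]-\si[a,b]\in\{0,1\}$ for every $(a,b)$, with at least one strict jump. To upgrade this to a cover in $\I$, I would invoke a length function that makes $\I$ graded (Incitti himself provides such a function in \cite{Incitti_A}, as do Richardson and Springer in a more general setting), and verify case by case that this length jumps by exactly one under each $\ct_{(i,j)}$.

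For the forward direction, assume $\si\cov\tau$ in $\I$ and consider the disagreement set $D=\{k\mid \si(k)\neq\tau(k)\}$. Because both $\si$ and $\tau$ are involutions, $D$ is stable under each of them, so it decomposes into fixed points and $2$-cycles of $\si$, and independently into fixed points and $2$-cycles of $\tau$. A length argument together with the cover hypothesis bounds $|D|\leq 4$. A case analysis on $|D|\in\{2,3,4\}$, and within each case on how the $2$-cycles of $\si|_D$ and $\tau|_D$ interlace, then reconstructs precisely the six configurations of Figure~\ref{Incitti}. Uniqueness of $(i,j)$ is immediate since $i$ and $j$ are the smallest and largest indices of $D$ in every picture.

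The main obstacle is cover-preservation, which resurfaces in both directions. In the backward direction, for Types~2-6 the ambient difference $\ell_{S_n}(\tau)-\ell_{S_n}(\si)$ exceeds one, so one cannot merely appeal to Bruhat covers in $S_n$; exhibiting an \emph{involution} length that increments by exactly one requires some care. In the forward direction, one must show that the grey regions of Figure~\ref{Incitti} contain no $\si$-points. The natural route is the contrapositive: given a hypothetical $\si$-point inside a grey region, construct an explicit involution strictly between $\si$ and $\tau$, contradicting the cover assumption. This intermediate-involution construction must be performed separately for each of the six types, and constitutes the genuine combinatorial content of the lemma.
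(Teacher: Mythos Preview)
The paper does not prove this lemma; it is quoted from \cite[Theorem~5.1]{Incitti_A} and used as a black box, so there is no proof here to compare your proposal against.

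That said, your outline is broadly in the spirit of Incitti's original argument (an explicit involution-length establishing gradedness, followed by a case analysis), and would likely go through. One small slip worth flagging: your uniqueness claim that ``$i$ and $j$ are the smallest and largest indices of $D$ in every picture'' is false in Types~2--5; for instance in Type~2 one has $D=\{i,j,\si(j)\}$ with $i<j<\si(j)$, so $j$ is not the largest element of $D$. The correct uniqueness statement is the one recorded immediately after the lemma in the paper: $i=\di(\si,\tau)$ and $j=\ci(\si,\tau)$, and these are intrinsic to the pair $(\si,\tau)$.
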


For $\si<\tau$ in $\I$, define $\di=\di(\si,\tau)=\min{\{i \mid \si(i) \neq \tau(i)\}}$ and $\ci=\ci(\si,\tau)=\min{\{j \geq {\di}+1 \mid \si(j) \in [\si(\di)+1,\tau(\di)]\}}$,
where $\di$ and $\ci$ are short for \emph{difference index} and \emph{covering index}, respectively.
It is clear from Figure \ref{Incitti} that if $\si \cov \tau$, then $i=\di(\si,\tau)$ and $j=\ci(\si,\tau)$.
If $\si<\tau$, we sometimes write $\mct_\tau(\si)$ instead of $\ct_{(\di,\ci)}(\si)$.
Here $\mathrm{m}$ stands for \emph{minimal}: $(\di,\ci)$ is lex-minimal among all pairs $(i,j)$ such that $\ct_{(i,j)}(\si) \leq \tau$.

Let us now consider the signed involutions.
Here we have the following characterisation of the covering relation.
It follows from \cite[\Th~3.6, \Def~4.2, and Remark~4.3]{Incitti_B}.
The necessary parts of the transformation $\Bct$ will be explained below.

\begin{Lemma}[\cite{Incitti_B}] \label{Bcover}
Let $\si,\tau \in \IB$. Then $\si \cov \tau$ in $\IB$ if and only if $\tau=\Bct_{(i,j)}(\si)$ for some (necessarily unique) pair $(i,j)$.
\end{Lemma}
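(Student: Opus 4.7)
Since the lemma is attributed directly to \cite[Theorem~3.6, Definition~4.2, and Remark~4.3]{Incitti_B}, my plan is not to reprove Incitti's classification from scratch but to explain how the stated form of the lemma is extracted from it. The first step is to recall Incitti's Theorem~3.6, which produces a finite list of local ``covering moves'' on signed involutions and shows that $\si \cov \tau$ in $\IB$ precisely when $\tau$ arises from $\si$ by exactly one such move. Each move is a signed analogue of one of the six types in Figure~\ref{Incitti}: in most cases it is performed simultaneously at a pair of positions $(i,j)$ and its negation $(-j,-i)$, in order to preserve the antisymmetry $\si(-k)=-\si(k)$, while additional type~$B$-specific cases involve positions that interact with the axis of symmetry (i.e., with $0$, or with pairs $(i,-i)$).

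The second step is to repackage this data via \cite[Definition~4.2]{Incitti_B}, which assigns to each covering move a unique ordered pair $(i,j)$ --- essentially the lexicographically smallest pair of indices at which the move acts non-trivially --- and defines $\Bct_{(i,j)}(\si)$ to be the result of applying that move to $\si$. Remark~4.3 of \cite{Incitti_B} confirms that this indexing is injective on covering moves, which yields the uniqueness clause in the lemma. Putting the two steps together gives both implications: if $\si \cov \tau$, then Theorem~3.6 supplies a unique covering move and hence a unique pair $(i,j)$ with $\tau=\Bct_{(i,j)}(\si)$; conversely, if $\tau=\Bct_{(i,j)}(\si)$, then by construction $\tau$ is obtained from $\si$ by one of Incitti's moves, and so $\si \cov \tau$.

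The only real obstacle is a notational one: one must check that the labelling of $i$ and $j$ used in our definition of $\Bct_{(i,j)}$ agrees with Incitti's conventions in each of the type~$B$-specific cases, where a single move may involve up to four orbits of positions under $k \mapsto -k$. This is a routine case-by-case verification against \cite[Section~4]{Incitti_B} and requires no new combinatorial input. Once this bookkeeping is in place, the lemma is an immediate reformulation of the cited results, in the form that will be needed for the EL-labelling constructed in the remainder of this section.
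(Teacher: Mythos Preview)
Your proposal is correct and matches the paper's approach exactly: the paper does not give an in-text proof of this lemma but simply attributes it to \cite[Theorem~3.6, Definition~4.2, and Remark~4.3]{Incitti_B}, precisely the sources you invoke. Your explanation of how the statement is assembled from those three ingredients is accurate and, if anything, more detailed than what the paper provides.
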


If $\tau=\Bct_{(i,j)}(\si)$ for some pair $(i,j)$, let $\la(\si,\tau)=(i,j)$.
By Lemma~\ref{Bcover}, this defines an edge-labelling of $\IB$, with $\{(i,j) \in [\pm n]^2 \mid i<j\}$ totally ordered by the lexicographic order.
We call $(i,j)$ the \emph{label} on a cover $\si \cov \tau$ if $\la(\si,\tau)=(i,j)$, and a label on a chain if it is the label on some cover of the chain.

In order to establish EL-shellability of the fixed point free signed involutions, we need the following results of Incitti.

\begin{Lemma}[\mbox{\cite[\Th~4.4]{Incitti_B}}] \label{incr}
Let $\si<\tau$ in $\IB$. Then there is exactly one increasing $\si$-$\tau$-chain, and it is lex-minimal.
\end{Lemma}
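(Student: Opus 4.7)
The plan is to mimic Incitti's strategy from type $A$ by iterating a minimal covering transformation. For $\si < \tau$ in $\IB$, I would first define $\mBct_\tau(\si) = \Bct_{(i,j)}(\si)$, where $(i,j)$ is the lex-minimum pair among all $(i',j')$ such that $\Bct_{(i',j')}(\si)$ exists and satisfies $\Bct_{(i',j')}(\si) \leq \tau$. Lemma~\ref{Bcover} guarantees that at least one such pair exists as long as $\si < \tau$, since any cover $\si \cov \si'$ with $\si' \leq \tau$ arises as $\Bct_{(i',j')}(\si)$ for a unique $(i',j')$. In direct analogy with the type $A$ indices $\di, \ci$ from Lemma~\ref{cover}, the minimal pair should be expressible in terms of the first position where $\si$ and $\tau$ disagree and the smallest subsequent position realising an admissible covering transformation of one of Incitti's six (plus type $B$) forms.

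Next I would prove the key monotonicity statement: if $\si < \tau$, $\la(\si, \mBct_\tau(\si)) = (i,j)$, and $\mBct_\tau(\si) < \tau$, then
\[
  \la(\mBct_\tau(\si),\mBct_\tau(\mBct_\tau(\si))) \geq_{\mathrm{lex}} (i,j).
\]
Granting this, iterating $\mBct_\tau$ starting at $\si$ produces a saturated chain $\si \cov \si_1 \cov \cd \cov \si_k = \tau$ whose label sequence is weakly increasing, giving an increasing $\si$-$\tau$-chain. For uniqueness, suppose $\si \cov \si_1' \cov \cd \cov \si_k' = \tau$ is another increasing chain. By definition of $\mBct_\tau$, the label $(i',j') = \la(\si, \si_1')$ satisfies $(i',j') \geq_{\mathrm{lex}} (i,j)$; if strict, every subsequent label in the chain is also $\geq_{\mathrm{lex}} (i',j') >_{\mathrm{lex}} (i,j)$, so no cover in the chain carries the label $(i,j)$. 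But then one can exhibit a cover $\si_\ell' \cov \Bct_{(i,j)}(\si_\ell') \leq \tau$ at some stage (using the lifting property of Lemma~\ref{lifting} for the SPM side, together with the explicit form of Incitti's transformations), contradicting the assumed monotonicity. Hence $\si_1' = \mBct_\tau(\si) = \si_1$, and induction on the interval length finishes uniqueness. Lex-minimality among all $\si$-$\tau$-chains is then automatic, since at every step the label $\la(\si_\ell,\si_{\ell+1})$ is, by construction, lex-minimum among all labels of covers $\si_\ell \cov \si''$ with $\si'' \leq \tau$.

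The principal obstacle is the monotonicity lemma. Its proof requires a case-by-case examination of how the six covering-transformation shapes in Figure~\ref{Incitti}, enriched by the additional type $B$ shapes from \cite{Incitti_B} that involve negative entries, fit together when two are performed consecutively. One must verify in each pairing that if $\mBct_\tau$ selects a pair $(i,j)$ for $\si$, then no pair strictly smaller than $(i,j)$ in the lexicographic order can yield an admissible covering transformation of $\mBct_\tau(\si)$ that still lies below $\tau$. The type $B$ analysis is genuinely heavier than the type $A$ one because the sign conditions $\si(-k) = -\si(k)$ force simultaneous modifications at positions $k$ and $-k$, doubling the number of shapes and the number of interactions to check; a uniform book-keeping device tracking the effect of $\Bct_{(i,j)}$ on the statistic $\si[i',j']$ from \Def~\ref{order} is what I would use to keep the verification manageable.
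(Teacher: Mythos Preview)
The paper does not prove this lemma; it is quoted verbatim from Incitti \cite[\Th~4.4]{Incitti_B} and used as a black box. So there is no ``paper's own proof'' to compare against.

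Your outline does track Incitti's original strategy: define the minimal covering transformation $\mBct_\tau$, establish the monotonicity of labels under iteration, and read off existence, uniqueness, and lex-minimality. You are also right that the monotonicity step is the substantive case analysis, and that it is heavier in type $B$ than in type $A$.

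There is, however, a genuine gap in your uniqueness argument. You write that if a competing increasing chain never uses the label $(i,j)$, then ``one can exhibit a cover $\si_\ell' \cov \Bct_{(i,j)}(\si_\ell') \leq \tau$ at some stage (using the lifting property of Lemma~\ref{lifting} for the SPM side\dots)''. Lemma~\ref{lifting} concerns a single fixed SPM $M$ on a poset; the covering transformations $\Bct_{(i,j)}$ are not SPMs (for fixed $(i,j)$ they are not even defined on all of $\IB$), so the lifting property simply does not apply. The correct mechanism is more direct and is what Incitti actually uses: for a cover labelled $(i',j')$ one has $i' = \di(\si,\si')$, so a transformation with $i' > \di(\si,\tau)$ leaves position $\di(\si,\tau)$ untouched. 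Hence along any increasing chain whose first label has $i' > \di(\si,\tau)$, the discrepancy at position $\di(\si,\tau)$ is never repaired and the chain cannot terminate at $\tau$; a refinement involving $\Bci$ and $\Bcv$ then pins down the $j$-coordinate. Replace the appeal to Lemma~\ref{lifting} by this difference-index argument and your sketch becomes a faithful outline of Incitti's proof.
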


\begin{Lemma}[\mbox{\cite[\Th~4.6]{Incitti_B}}] \label{decr}
Let $\si<\tau$ in $\IB$. Then there is exactly one decreasing $\si$-$\tau$-chain.
\end{Lemma}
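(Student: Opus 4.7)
The plan is to mimic Incitti's proof of the increasing version (Lemma~\ref{incr}), working with the lex-maximal rather than the lex-minimal available label at each step and exploiting the classification of covers in $\IB$ underlying Lemma~\ref{Bcover}. I would induct on $\ell(\tau)-\ell(\si)$, the base case $\ell(\tau)-\ell(\si)=1$ being immediate from Lemma~\ref{Bcover}.

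For the inductive step, the first task is to pick a canonical first cover. In any decreasing chain $\si=x_0 \cov x_1 \cov \cd \cov x_k=\tau$, the label $\la(\si,x_1)$ is necessarily lex-greatest among all labels of the chain. So I would define $x_1$ to be the cover of $\si$ with lex-maximal label $(i^*,j^*):=\la(\si,x_1)$ subject to $x_1 \leq \tau$; existence of such a cover follows from $\si<\tau$ together with Lemma~\ref{Bcover} applied to any saturated chain from $\si$ to $\tau$. By the inductive hypothesis there is a unique decreasing $x_1$-$\tau$-chain, and the crucial point is to verify that its first label is strictly less than $(i^*,j^*)$, so that prefixing it with $\si \cov x_1$ produces a decreasing $\si$-$\tau$-chain.

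For uniqueness, suppose a second decreasing chain begins $\si \cov x_1'$ with $\la(\si,x_1')<(i^*,j^*)$. Then every subsequent label is even smaller, so the pair $(i^*,j^*)$ never appears in this alternative chain. I would then argue that the disagreement between $\si$ and $\tau$ at the coordinates singled out by $(i^*,j^*)$ (the places where the corresponding $\Bct$-transformation acts) cannot be resolved by a sequence of covering transformations whose labels are all lex-strictly below $(i^*,j^*)$, obtaining the desired contradiction. An inductive application then forces the second chain to agree with the first at every step.

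The main obstacle, as in Incitti's original argument, is the case analysis. For signed involutions, beyond the six covering types of the type~$A$ situation (Figure~\ref{Incitti}) there are additional types involving sign changes, classified in \cite{Incitti_B}. For each type one must check (a) that the lex-maximal admissible first label exists and gives $x_1 \leq \tau$, (b) that the unique decreasing $x_1$-$\tau$-chain supplied by induction has first label strictly less than $(i^*,j^*)$, and (c) that no alternative initial cover can be completed to a decreasing chain. In practice I would transcribe Incitti's case-by-case verification of Lemma~\ref{incr} from \cite[\Th~4.4]{Incitti_B}, replacing \emph{lex-minimal} by \emph{lex-maximal} throughout, and verify that each step dualises; this dualisation is essentially bookkeeping but is the part of the proof that genuinely has to be checked type by type.
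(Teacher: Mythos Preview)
The paper does not prove this lemma at all; it is quoted directly from Incitti \cite[\Th~4.6]{Incitti_B} and used as a black box in the proof of \Th~\ref{thm:EL}. There is therefore no proof in the present paper to compare your proposal against.

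As a standalone attempt, your outline has the right shape---induct on rank difference, pick an extremal admissible first cover, and defer the verification to Incitti's case tables---and this is indeed how Incitti structures the argument for the increasing chain in Lemma~\ref{incr}. One point deserves more care, however. Your uniqueness argument asserts that if a second decreasing chain starts with a label strictly below $(i^*,j^*)$, then ``the disagreement between $\si$ and $\tau$ at the coordinates singled out by $(i^*,j^*)$ cannot be resolved''. For the \emph{increasing} chain this style of argument works cleanly because the lex-minimal label has first coordinate $\di(\si,\tau)$, the leftmost position where $\si$ and $\tau$ differ, and any saturated chain must contain a cover whose label has that first coordinate. For the lex-\emph{maximal} admissible label there is no such obvious anchor: $(i^*,j^*)$ is defined only by maximality among covers of $\si$ lying below $\tau$, not by any intrinsic ``last disagreement'' between $\si$ and $\tau$, so it is not a priori clear why every decreasing chain must use it. You would need either to identify the correct dual invariant (a rightmost difference index, say) and prove that the lex-maximal admissible label is governed by it, or to carry out the uniqueness part of the case analysis explicitly rather than by a symmetry appeal. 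Incitti's published proof handles this through the explicit tables; simply ``replacing lex-minimal by lex-maximal throughout'' is not guaranteed to transport the argument without that check.
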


\begin{Remark}
Since $\Bct_{(i,j)}(\si)(i)>\Bct_{(i,j)}(\si)(j)$, there is also exactly one $\si$-$\tau$-chain with weakly decreasing labels.
\end{Remark}

We now delve somewhat deeper into the covering relation for the signed involutions.
For $\si<\tau$ in $\IB$, Incitti defines a new signed involution $\pi=\mBct_\tau(\si)$, where $\mathrm{m}$ stands for \emph{minimal} (this is explained below), such that $\si \cov \pi \leq \tau$.
This is done in \cite[Tables~2--9]{Incitti_B}, which correspond to eight different cases, two of which are called easy, four normal, and two hard.
For example, N2.2 in Table~5 is normal of the second kind, and the first move performed to reach $\pi$ from $\si$, in the sense of Figure~\ref{Incitti}, is of type~2.
Black dots are used for $\si$, white dots for $\pi$, and white squares for $\tau$.
The light grey regions correspond to the first move that we perform.
Consider, e.g., N2.2 where the black dots move according to type~2 to form the involution $\mct_\tau(\si)$ (note that this is not a signed involution).
The medium and dark grey regions correspond to the second and third moves.

The top right cells of Tables~2--9 explain further how to interpret the permutation diagrams.
The number $\Bcv(\si,\tau)$, where $\cv$ is short for \emph{covering value}, is defined as $\pi(\di)$.
The exact definition of $\Bci$ is case dependent and not important here.
However, it turns out that if $\tau=\Bct_{(i,j)}(\si)$, then $i=\di$ and $j=\Bci$.
Moreover, if $\si<\tau$, then $(\di,\Bci)$ is lex-minimal among all pairs $(i,j)$ such that $\Bct_{(i,j)}(\si) \leq \tau$.

The reader of this paper can safely ignore the quintuple associated with each diagram, as well as the information given in the top left and top middle cells.

We are now ready to prove EL-shellability of the fixed point free signed involutions.
As mentioned earlier, we follow the strategy in the proof of \cite[\Th~1.4]{Hansson_graded_A}.
That is, we prove that the decreasing $\si$-$\tau$-chain in $\IB$ is contained in $\FB$ and that this chain is lex-maximal.
The major difference between the two proofs is that we now have to consider many more possible coverings.
Moreover, some arguments have been simplified.

\begin{Thm} \label{thm:EL}
The poset $\FB$ is EL-shellable.
\end{Thm}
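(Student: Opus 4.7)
The plan is to emulate the strategy of \cite[\Th~1.4]{Hansson_graded_A}. I would use the edge-labelling on $\IB$ given by Lemma~\ref{Bcover} (restricted to covers between elements of $\FB$), but equip the label set with the \emph{reverse} of the lexicographic order so that ``increasing'' in $\FB$ corresponds to ``decreasing'' in $\IB$ and ``lex-minimal'' in $\FB$ corresponds to ``lex-maximal'' in $\IB$. Because $\FB$ is ordered by the dual of the Bruhat order, it then suffices to show, for any $\sigma,\tau \in \FB$ with $\sigma<\tau$ in the Bruhat order on $\IB$, that the unique decreasing $\sigma$-$\tau$-chain in $\IB$ supplied by Lemma~\ref{decr} both lies inside $\FB$ and is lex-maximal among all $\sigma$-$\tau$-chains in $\IB$ (hence also among those in $\FB$, since $\FB$ is an induced subposet). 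Uniqueness of the increasing chain in $\FB$ will then come directly from Lemma~\ref{decr}.

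The first main step is the containment in $\FB$. Because the decreasing chain is built by iteratively applying covering transformations $\Bct_{(i,j)}$, I would verify, case-by-case using Incitti's classification in \cite[Tables~2--9]{Incitti_B}, that the transformation takes a fixed point free signed involution to another fixed point free one whenever the intended top element is itself fixed point free. Each of the eight cases (two easy, four normal, two hard) is a local move affecting only a handful of entries, highlighted by the black and white dots in the diagrams. The check reduces to showing that no such move can produce an entry $\pi(k)=k$ under the constraint that both $\sigma$ and the current top element are fixed point free: a would-be new fixed point at position $k$ forces the diagram to place a dot on the main diagonal, which is incompatible with the diagonal of $\sigma$ or $\tau$ being empty. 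I expect this exhaustive verification to be the main obstacle, since there are substantially more configurations than in the type~$A$ setting of \cite{Hansson_graded_A}.

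The second main step is lex-maximality of the decreasing chain among all $\sigma$-$\tau$-chains in $\IB$. Here I would invoke the minimality property of $(\di,\Bci)$ recorded just before the theorem, namely that $(\di,\Bci)$ is lex-minimal among all $(i,j)$ with $\Bct_{(i,j)}(\sigma)\leq\tau$, and dualise it to obtain the maximality of the label of the first cover of the decreasing chain among admissible covers. An induction on the rank difference $\ell(\tau)-\ell(\sigma)$, combined with a short lifting-style comparison of two competing chains at their first disagreement, then extends this maximality from the first cover to the whole chain. This portion of the argument should follow the template of \cite[\Th~1.4]{Hansson_graded_A} with only cosmetic modifications. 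Combining containment in $\FB$ with lex-maximality in $\IB$ and with Lemma~\ref{decr} produces a unique lex-minimal increasing $\sigma$-$\tau$-chain in $\FB$, which establishes EL-shellability.
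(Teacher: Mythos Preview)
Your overall framework matches the paper's: restrict Incitti's labelling to $\FB$, reverse the order on labels, and show that for $\sigma<\tau$ in $\FB$ the unique decreasing $\sigma$-$\tau$-chain in $\IB$ lies in $\FB$ and is lex-maximal. However, both of your main steps, as sketched, have real gaps.

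For containment in $\FB$, your heuristic ``a would-be new fixed point at position $k$ forces a dot on the main diagonal, which is incompatible with the diagonal of $\sigma$ or $\tau$ being empty'' does not work: the fixed point you must rule out lives in an \emph{intermediate} element $\tau_1$, not in $\sigma$ or $\tau$, so emptiness of their diagonals says nothing directly. The paper avoids a blanket case analysis by working with the top cover $\tau_1\cov\tau$ of the decreasing chain and first proving that its label has first coordinate $h=\di(\sigma,\tau)$; since $\sigma$ is fixed point free, $h$ is an exceedance of $\sigma$. One then reads off from Incitti's tables that the only $\Bct$-moves in which the lower element has more fixed points than the upper one are E1.1, N1.1, N4.2, and H2.1, and in each of these the new fixed point sits precisely at position $h$. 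That forces $\sigma[h,h+1]>\tau_1[h,h+1]$, contradicting $\sigma\leq\tau_1$ via Definition~\ref{order}. So the genuine case split is over only four diagrams, and the contradiction comes from the tableau criterion applied to $\sigma$ and $\tau_1$, not from inspecting $\tau_1$ in isolation.

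For lex-maximality, ``dualising'' the lex-minimality of $(\di,\Bci)$ does not obviously yield any statement about the top label of the decreasing chain, and the induction you allude to would need exactly such a statement as its engine. The paper instead argues directly that the lex-maximal $\sigma$-$\tau$-chain in $\IB$ must be decreasing: if two consecutive labels satisfied $\lambda(\pi_1,\pi_2)\leq\lambda(\pi_2,\pi_3)$, then by Lemma~\ref{incr} the chain $\pi_1\cov\pi_2\cov\pi_3$ would be the lex-\emph{minimal} $\pi_1$-$\pi_3$-chain, and replacing it by the decreasing $\pi_1$-$\pi_3$-chain would produce a lex-larger $\sigma$-$\tau$-chain, a contradiction. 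This uses only Lemma~\ref{incr} on length-two intervals and requires no new maximality principle.
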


\begin{proof}
Let $\si<\tau$ in $\FB$, let $C_D=(\si \cov \tau_k \cov \cd \cov \tau_1 \cov \tau)$ be the decreasing $\si$-$\tau$-chain in $\IB$, where $k \geq 1$,
and let $(i_\tau,j_\tau)$ be the label on $\tau_1 \cov \tau$.
Observe that if we can prove that $\tau_1 \in \FB$, it follows that $\tau_1,\p,\tau_k \in \FB$, because the decreasing $\si$-$\tau_1$-chain in $\IB$ is $\si \cov \tau_k \cov \cd \cov \tau_2 \cov \tau_1$.

Let $h=\di(\si,\tau)$.
Since $\si$ has no fixed points, $h$ is an exceedance of $\si$ (i.e., $\si(h)>h$).
Indeed, if $\si(i)<i$, then $\si(i)$ is an exceedance of $\si$, and $\si(\si(i))=\tau(\si(i)) \ToT \si(i)=\tau(i)$.
Consider any cover $\pi \cov \pi'$ in $[\si,\tau]$.
Two applications of \Def~\ref{order} yield $\di(\pi,\pi') \geq \di(\pi,\tau) \geq h$.
Since the label $(i,j)$ on $\pi \cov \pi'$ satisfies $i=\di(\pi,\pi')$, it follows that $h$ is in some label on $C_D$, and because $C_D$ is decreasing, $i_\tau=h$.

Suppose $\tau_1 \notin \FB$ so that $\tau_1 \cov \tau$ is a case where the number of fixed points decreases.
The only such cases are E1.1, N1.1, N4.2, and H2.1, and in all of them, $h$ is a fixed point of $\tau_1$.
Observe that $\tau_1$ has fixed points also in E2.6a, N1.2a, N1.2b, N1.3a, N1.3b, N2.2, H1.2, H2.2a, H2.2b, and H2.3, and $h$ is not always one of them, but the number of fixed points does not decrease in those cases.
Therefore, $\si[h,h+1]>\tau_1[h,h+1]$.
Since $\si \leq \tau_1$, it follows from \Def~\ref{order} that $\tau_1 \in \FB$.

We have proved that the decreasing $\si$-$\tau$-chain in $\IB$ is contained in $\FB$.
The next step is to show that this chain is lex-maximal.
This is done in the same way as in the proof of \cite[\Th~1.4]{Hansson_graded_A}, where Incitti's type~$A$ version of Lemma~\ref{incr} is used.
We include the argument here in order to make the proof complete.

To obtain a contradiction, consider the lex-maximal $\si$-$\tau$-chain in $\IB$, and suppose it is not decreasing;
call it $C=(\pi_1 \cov \cd \cov \pi_{k+2})$ and say that $\la(\pi_1,\pi_2) \leq \la(\pi_2,\pi_3)$.
By Lemma~\ref{incr}, $\pi_1 \cov \pi_2 \cov \pi_3$ is lex-minimal among the $\pi_1$-$\pi_3$-chains in $\IB$.
Thus, $\pi_1 \cov \pi_2' \cov \pi_3 \cov \cd \cov \pi_{k+2}$, where $\pi_1 \cov \pi_2' \cov \pi_3$ is the decreasing $\pi_1$-$\pi_3$-chain, is lex-larger than $C$, a contradiction.

So, the decreasing $\si$-$\tau$-chain in $\IB$ is contained in $\FB$ and it is lex-maximal.
Now, by reversing the lexicographic order on the set $\{(i,j) \in [\pm n]^2 \mid i<j\}$, we obtain an edge-labelling of $\FB$ such that in each interval $[\si,\tau]$, there is an increasing $\si$-$\tau$-chain which is lex-minimal.
By Lemma~\ref{decr} and the remark following it, this is an EL-labelling of $\FB$.
\end{proof}

The fact that the decreasing chain in $\IB$ is contained in $\FB$ allows as to compute the dimension of the PL ball $\De(\FB-\{\hat{0},\hat{1}\})$ as
$\rho(\hat{1})-\rho(\hat{0})-2$, where $\rho$ is the rank function in $\IB$.
In $\FB$, $\hat{1}$ is the reverse permutation $w_0$ and $\hat{0}$ is the product $(-n,-n+1)(-n+2,-n+3) \cd (n-1,n)$ of adjacent transpositions.

The length of $\si$ in $S_n^B$ is
\[
\ell(\si)=\frac{\inv(\si)+\negg(\si)}{2},
\]
where
\[
\inv(\si)=|\{(i,j) \in [\pm n]^2 \mid \h{$i<j$ and $\si(i)>\si(j)$}\}|
\]
(the number of inversions of $\si$, i.e., the length of $\si$ in $S_{2n}$) and
\[
\negg(\si)=|\{i \in [n] \mid \si(i)<0\}|.
\]
By \cite[\Th~3.7]{Incitti_B},
\[
\rho(\si)=\frac{\ell(\si)+\dna(\si)}{2}
\]
where
\[
\dna(\si)=|\{i \in [n] \mid -i \leq \si(i)<i\}|.
\]
It follows that $\rho(\hat{1})=\frac{n^2+n}{2}$.
Furthermore, $\rho(\hat{0})=\frac{n}{2}$ if $n$ is even and $\frac{n+1}{2}$ if $n$ is odd, so the dimension is
\[
\begin{cases}\frac{n^2}{2}-2 & \h{if $n$ is even,} \\[1mm] \frac{n^2-1}{2}-2 & \h{if $n$ is odd.}\end{cases}
\]

\bibliographystyle{amsplain}

\bibliography{Referenser_bara_initialer}

\end{document}